\documentclass[12pt]{article}

\RequirePackage[OT1]{fontenc}
\RequirePackage{amsthm,amsmath}
\RequirePackage[numbers]{natbib}
\RequirePackage[colorlinks,citecolor=blue,urlcolor=blue]{hyperref}

\usepackage{amsmath}
\usepackage{graphicx,psfrag,epsf}
\usepackage{enumerate}
\usepackage{natbib}
\usepackage{url} 
\usepackage{hyperref}
\usepackage{amsfonts}

\newtheorem{theorem}{Theorem}

\newtheorem{proposition}[theorem]{Proposition}

\newcommand{\blind}{1}

\addtolength{\oddsidemargin}{-.5in}%
\addtolength{\evensidemargin}{-.5in}%
\addtolength{\textwidth}{1in}%
\addtolength{\textheight}{-.3in}%
\addtolength{\topmargin}{-.8in}%

\begin{document}

\def\spacingset#1{\renewcommand{\baselinestretch}%
{#1}\small\normalsize} \spacingset{1}


\if1\blind
{
  \title{\bf The mean and variance in coupons required to complete a collection}
  \author{Rohit Pandey}
  \maketitle
} \fi

\if0\blind
{
  \bigskip
  \bigskip
  \bigskip
  \begin{center}
    {\LARGE\bf The mean and variance in coupons required to complete a collection}
\end{center}
  \medskip
} \fi

\begin{abstract}
This paper is about the Coupon collector's problem. There are some coupons, or baseball cards, or other plastic knick-knacks that are put into bags of chips or under soda bottles, etc. A collector starts collecting these trinkets and wants to form a complete collection of all possible ones. Every time they buy the product however, they don't know which coupon they will ``collect" until they open the product. How many coupons do they need to collect before they complete the collection? In this paper, we explore the mean and variance of this random variable, $N$ using various methods. Some of them work only for the special case with the coupons having equal probabilities of being collected, while others generalize to the case where the  coupons are collected with unequal probabilities (which is closer to a real world scenario).
\end{abstract}

\section*{Problems and expressions}
\subsection*{Problems}
There are $n$ coupons in a collection. A collector has the ability to purchase a coupon, but can't choose the coupon he purchases. Instead, the coupon is revealed to be coupon $i$ with probability $p_i=\frac 1 n$. Let $N$ be the number of coupons he'll need to collect before he has at least one coupon of each type. Let's call this random variable $N$. Now, we want to solve the following problems:

\begin{description}
\item[P1]The expected value of $N$ when the coupons have equal probabilities of being collected.
\item[P2]The expected value of $N$ when the coupons have unequal probabilities of being collected.
\item[P3]The variance of $N$ when the coupons have equal probabilities of being collected.
\item[P4]The variance of $N$ when the coupons have unequal probabilities of being collected.
\item[P5]The density function of $N$ (meaning the entire distribution) when the coupons have equal probabilities.
\item[P6]The density function of $N$ (meaning the entire distribution) when the coupons have unequal probabilities.
\end{description}

This paper will go over various solutions, some more powerful (can answer more of the above questions) than others. It's also clear that if we can solve the even numbered problems (2,4,6) we can simply substitute $p_i=\frac{1}{n}\;\; \forall i$ and solve the corresponding odd numbered problems (1,3,5) respectively. 

\subsection*{Expressions}
In this section, we provide the solutions to the problems, \textbf{P1} through \textbf{P6} and devote the rest of the paper to their derivations.

\begin{theorem}[Expression for P1]\label{thm:p1}
The expected number of coupons a collector will need to complete the collection when the probabilities of collecting each of the $n$ coupons is $\frac{1}{n}$ is: 

$$E(N) = n\sum\limits_{m=1}^{n}\frac{1}{m}$$
\end{theorem}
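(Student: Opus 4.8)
The plan is to decompose $N$ into a sum of geometrically distributed waiting times, one for each ``new'' coupon type acquired, and then apply linearity of expectation. First I would define, for $i=1,\dots,n$, the random variable $N_i$ to be the number of coupons purchased while the collector holds exactly $i-1$ distinct types, i.e.\ the waiting time from the appearance of the $(i-1)$-th distinct coupon until the appearance of the $i$-th. By construction these phases tile the whole process, so $N=\sum_{i=1}^{n}N_i$.

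Next I would identify the distribution of each $N_i$. Conditioned on the collector currently holding $i-1$ distinct types, each subsequent purchase is a previously-unseen type with probability $\frac{n-(i-1)}{n}$, independently of everything that came before, since every draw is uniform on the $n$ types and independent of the past. Hence $N_i$ is geometric with success probability $q_i=\frac{n-i+1}{n}$, so $E(N_i)=\frac{1}{q_i}=\frac{n}{n-i+1}$.

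Finally I would sum using linearity of expectation (which requires no independence among the $N_i$):
$$E(N)=\sum_{i=1}^{n}E(N_i)=\sum_{i=1}^{n}\frac{n}{n-i+1}=n\sum_{m=1}^{n}\frac{1}{m},$$
the last step being the reindexing $m=n-i+1$.

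The computation is entirely routine; the only point that genuinely needs justification is the claim that the per-draw probability of collecting a new type depends on the history solely through the \emph{count} of distinct types collected so far (not through which ones), which is immediate from the i.i.d.\ uniform nature of the draws. The same memorylessness argument in fact shows the $N_i$ are mutually independent, which is not needed here but will be convenient for the variance computation in \textbf{P3}.
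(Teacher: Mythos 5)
Your proposal is correct and is essentially the same argument as the paper's Proof 1 of Theorem \ref{thm:p1}: decompose $N$ into the geometric waiting times between successive new coupon types, take expectations, and reindex the resulting harmonic sum. (If anything, your indexing of the phases is slightly cleaner than the paper's, which runs its sum over $m=1$ to $n$ where $m=0$ to $n-1$ is meant.)
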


\begin{theorem}[Expression for P2]\label{thm:p2}
The variance in the number of coupons a collector will need to complete the collection when the probabilities of collecting each of the $n$ coupons is $\frac{1}{n}$ is: 

$$V(N) = n^2\sum\limits_{i=1}^n \frac{1}{i^2}-n\sum\limits_{k=1}^n \frac{1}{k}$$
\end{theorem}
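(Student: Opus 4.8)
The plan is to decompose $N$ into a sum of independent geometric waiting times. For $m=1,\dots,n$, let $T_m$ be the number of coupons purchased while the collector holds exactly $m-1$ distinct types, counting up to and including the purchase that produces the $m$-th new type. Then $N = T_1 + T_2 + \cdots + T_n$. Since each purchase is an independent uniform draw, while the collector is in the $m$-th phase the probability that a given purchase yields a new type is exactly $p_m = \frac{n-m+1}{n}$, regardless of which $m-1$ types are already held and regardless of how long the earlier phases lasted. Hence each $T_m$ is geometric with success probability $p_m$, and the family $\{T_m\}_{m=1}^n$ is mutually independent.

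The first substantive step is to justify this decomposition and, in particular, the independence assertion carefully, since that is where the probabilistic content of the argument sits; I expect this to be the main obstacle, as the rest is bookkeeping. The cleanest way is to argue by the memorylessness of the underlying sequence of i.i.d.\ draws: conditioning on the entire history up to the end of phase $m-1$ leaves the future draws i.i.d., and the event ``a draw is new during phase $m$'' has the fixed probability $p_m$, so $T_m \mid (\text{past})$ is geometric$(p_m)$ with a distribution not depending on the past.

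Once this is established, the variance computation is routine. Using $V(\text{Geom}(p)) = \frac{1-p}{p^2}$ and additivity of variance over independent summands gives
$$V(N) = \sum_{m=1}^n \frac{1-p_m}{p_m^2}.$$
Substituting $p_m = \frac{n-m+1}{n}$, so that $1-p_m = \frac{m-1}{n}$ and $\frac{1-p_m}{p_m^2} = \frac{n(m-1)}{(n-m+1)^2}$, and reindexing by $k = n-m+1$, the sum becomes
$$V(N) = \sum_{k=1}^n \frac{n(n-k)}{k^2} = n^2\sum_{k=1}^n \frac{1}{k^2} - n\sum_{k=1}^n \frac{1}{k},$$
which is the claimed expression. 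The same decomposition also recovers $E(N) = \sum_{m=1}^n \frac{1}{p_m} = n\sum_{k=1}^n \frac{1}{k}$, consistent with Theorem~\ref{thm:p1}, which serves as a sanity check. (An alternative route, avoiding the independence argument, would be to compute $E(N^2)$ from the tail formula $E(N^2) = \sum_{t\ge 0}(2t+1)\,P(N>t)$ together with an inclusion–exclusion expression for $P(N>t)$; I would keep the geometric-decomposition proof as the primary one since it is shorter and generalizes more transparently.)
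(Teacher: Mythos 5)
Your proposal is correct and takes essentially the same approach as the paper's own proof in Section~\ref{geom_sum}: decomposing $N$ into a sum of independent geometric phase lengths and adding their variances. In fact, you supply more detail (the memorylessness justification of independence and the explicit reindexing) than the paper, which simply asserts the result after citing independence.
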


\begin{theorem}[Expression for P3]\label{thm:p3}
The expected number of coupons a collector will need to complete the collection when the probabilities of collecting coupon $i$ is $p_i$ ($\sum\limits_{i=1}^{n}p_i=1$) is:

$$E(N)= \sum\limits_j\frac 1 p_j - \sum\limits_{i<j}\frac {1}{p_i+p_j} + \dots +(-1)^{m-1} \frac{1}{p_1+\dots+p_m}$$
\end{theorem}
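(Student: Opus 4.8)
The plan is to obtain $E(N)$ from the tail-sum identity for a nonnegative integer-valued random variable, $E(N) = \sum_{t \ge 0} \Pr(N > t)$, and to evaluate $\Pr(N > t)$ by inclusion--exclusion over the set of coupon types still missing after $t$ purchases. This is the natural route here because, unlike the equal-probability case where one can split $N$ into independent geometric ``stage'' waiting times, the stages in the unequal-probability setting are not clean, so one should attack the distribution of $N$ directly.

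First I would fix $t$ and, for each coupon type $i$, let $A_i$ be the event that type $i$ fails to appear among the first $t$ purchases. Since $\{N > t\} = \bigcup_{i=1}^n A_i$, inclusion--exclusion gives
$$\Pr(N > t) = \sum_{k=1}^{n} (-1)^{k-1} \sum_{i_1 < \cdots < i_k} \Pr\!\left(A_{i_1} \cap \cdots \cap A_{i_k}\right).$$
The one substantive computation is that, because the purchases are i.i.d.\ and $\sum_i p_i = 1$, the chance that none of the first $t$ purchases falls in a given set $\{i_1,\dots,i_k\}$ equals $\left(1 - p_{i_1} - \cdots - p_{i_k}\right)^t$. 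Substituting into the tail sum and, for each fixed subset $S=\{i_1,\dots,i_k\}$ with $q_S := p_{i_1}+\cdots+p_{i_k} \in (0,1]$, using $\sum_{t\ge 0}(1-q_S)^t = 1/q_S$, one regroups to obtain
$$E(N) = \sum_{k=1}^{n} (-1)^{k-1} \sum_{i_1 < \cdots < i_k} \frac{1}{p_{i_1} + \cdots + p_{i_k}},$$
which is exactly the claimed expansion (the $k=1$ terms give $\sum_j 1/p_j$, the $k=2$ terms give $-\sum_{i<j}1/(p_i+p_j)$, and so on up to $k=n$). A small preliminary remark to record first is that $\Pr(N < \infty) = 1$ since every $p_i > 0$, which is what legitimizes the tail-sum formula and keeps the singleton terms finite.

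The step I expect to need the most care is organizational rather than conceptual: the double sum over $t$ and over subsets is alternating, so the interchange of the two summations must be justified. The clean resolution is that there are only $2^n - 1$ nonempty subsets, so the outer sum is finite and one may legitimately perform the inner geometric series over $t$ first inside each group, leaving no genuine Fubini/Tonelli obstruction. I would also note in passing that the same identity can be re-derived by Poissonizing the process: run $n$ independent Poisson streams of rates $p_1,\dots,p_n$, so the first-arrival times are independent $\mathrm{Exponential}(p_i)$; compute $E\!\left(\max_i \mathrm{Exponential}(p_i)\right) = \int_0^\infty \bigl(1 - \prod_i (1 - e^{-p_i t})\bigr)\,dt$ by expanding the product; and pass back to $E(N)$ via Wald's identity, the total arrival rate being $1$ so that the expected completion time equals $E(N)$. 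This is an alternative but equivalent path to the same formula, and it is arguably the cleaner one to write up.
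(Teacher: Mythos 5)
Your proof is correct, and it reaches the formula by a route the paper does not actually write down. The paper proves this statement twice: once via the maximum-of-minimums identity, writing $N=\max_j N_j$ and expanding $\max_j N_j=\sum_j N_j-\sum_{i<j}\min(N_i,N_j)+\cdots$, then taking expectations using the fact that $\min_{j\in S}N_j$ is geometric with parameter $\sum_{j\in S}p_j$; and once by Poissonization, computing $E(X)=\int_0^\infty\bigl(1-\prod_j(1-e^{-p_jt})\bigr)\,dt$ for the continuous-time completion time $X$ and transferring back to $N$ (the alternative you sketch in your closing remark). Your argument instead applies inclusion--exclusion directly to the event $\{N>t\}=\bigcup_i A_i$ and sums the tail probabilities, using $\sum_{t\ge0}(1-q_S)^t=1/q_S$. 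This is of course the same combinatorial identity in a different guise --- the max-of-mins expansion is exactly inclusion--exclusion lifted to random variables, and $E[\min_{j\in S}N_j]=\sum_{t\ge0}P(\min_{j\in S}N_j>t)$ recovers your geometric series --- but your version is the more self-contained write-up: it needs only the union bound with signs and the tail-sum formula, with no appeal to the max-min identity as a citable lemma and no Poisson machinery, and your handling of the finitely-many-subsets interchange and of the $p_i>0$ hypothesis is exactly the care the paper omits. What the paper's Poisson route buys in exchange is independence of the first-arrival times $X_j$, which is what lets it go on to second moments and the variance in Theorem \ref{thm:p4}; your tail-sum argument gives the mean cleanly but does not extend as directly to $V(N)$.
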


\begin{theorem}[Expression for P4]\label{thm:p4}
The variance in the number of coupons a collector will need to complete the collection when the probabilities of collecting coupon $i$ is $p_i$ ($\sum\limits_{i=1}^{n}p_i=1$) is:
\begin{multline*}
V(N) = \left(\sum \frac {1} {p_j^2} -\sum_{i<j} \frac{1}{(p_i+p_j)^2}+\dots +(-1)^{n-1}\frac{1}{(p_1+\dots+p_n)^2}\right)-\\
\left(\sum \frac {1} {p_j} -\sum_{i<j} \frac{1}{(p_i+p_j)}+\dots +(-1)^{n-1}\frac{1}{(p_1+\dots+p_n)}\right)^2-\\
\left(\sum \frac {1} {p_j} -\sum_{i<j} \frac{1}{(p_i+p_j)}+\dots +(-1)^{n-1}\frac{1}{(p_1+\dots+p_n)}\right)
\end{multline*}

\end{theorem}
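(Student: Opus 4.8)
The plan is to get $V(N)$ from $V(N)=E(N^2)-E(N)^2$, reading $E(N)$ straight off Theorem~\ref{thm:p3}, so that the whole task reduces to computing the second moment $E(N^2)$. I would reach $E(N^2)$ through the tail-sum identity for a nonnegative integer-valued random variable, $E(N^2)=\sum_{t\ge 0}(2t+1)\,P(N>t)$ --- the natural second-moment companion of $E(N)=\sum_{t\ge 0}P(N>t)$, which is presumably how Theorem~\ref{thm:p3} is obtained. The event $\{N>t\}$ is exactly ``after $t$ purchases some coupon type is still missing,'' so inclusion--exclusion over the events $A_i^{(t)}=\{\text{coupon }i\text{ absent in the first }t\text{ draws}\}$, using $P\bigl(\bigcap_{i\in I}A_i^{(t)}\bigr)=\bigl(1-\sum_{i\in I}p_i\bigr)^t$, gives
\[
P(N>t)=\sum_{\emptyset\neq I\subseteq\{1,\dots,n\}}(-1)^{|I|-1}\Bigl(1-\sum_{i\in I}p_i\Bigr)^{t},
\]
an identity that also holds at $t=0$ (both sides equal $1$).

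Substituting this into the tail sum and interchanging the two summations --- legitimate since the double series converges absolutely (finitely many subsets $I$, each inner series of geometric type) --- reduces everything to the elementary sums $\sum_{t\ge0}q^t=\tfrac1{1-q}$ and $\sum_{t\ge0}t\,q^t=\tfrac{q}{(1-q)^2}$ at $q=1-s_I$, where $s_I:=\sum_{i\in I}p_i$. Since $\sum_{t\ge0}(2t+1)(1-s)^t=\tfrac{2}{s^2}-\tfrac1s$, this yields
\[
E(N^2)=2\sum_{\emptyset\neq I}\frac{(-1)^{|I|-1}}{s_I^{2}}-\sum_{\emptyset\neq I}\frac{(-1)^{|I|-1}}{s_I},
\]
whose second sum is exactly $E(N)$ from Theorem~\ref{thm:p3}. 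Writing $A$ for that alternating sum $\sum 1/p_j-\sum_{i<j}1/(p_i+p_j)+\dots$ and $B$ for the squared-denominator version $\sum 1/p_j^2-\sum_{i<j}1/(p_i+p_j)^2+\dots$, we get $E(N^2)=2B-A$ and hence $V(N)=2B-A-A^2$, the asserted formula (with the factor $2$ on the first bracket that drops out of $\sum_{t\ge0}(2t+1)q^t$).

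The main obstacle is not any single hard step but the bookkeeping: pinning down the inclusion--exclusion signs and the $t=0$ boundary term, justifying the interchange of sums, and --- most importantly --- not losing the constants coming out of the geometric series (this is precisely where the coefficient $2$ in front of the squared-denominator alternating sum enters, and the quickest way to catch a slip is the trivial case $n=1$, where the formula must return $V(N)=0$). Two sanity checks I would run: (i) put $p_i=1/n$ and use $\sum_{k=1}^n\binom nk\frac{(-1)^{k-1}}{k}=H_n$ together with $\sum_{k=1}^n\binom nk\frac{(-1)^{k-1}}{k^2}=\tfrac12\bigl(H_n^2+H_n^{(2)}\bigr)$ to recover Theorem~\ref{thm:p2}; and (ii) re-derive the same answer by Poissonisation --- embedding the purchases in a rate-$1$ Poisson process makes the completion time $T=\max_i\tau_i$ with independent $\tau_i\sim\mathrm{Exp}(p_i)$, and Wald-type identities give $E(N)=E(T)$ and $V(N)=\operatorname{Var}(T)-E(T)$, with $E(T)$ and $E(T^2)$ computed from the same inclusion--exclusion applied to $P(T\le t)=\prod_i(1-e^{-p_i t})$.
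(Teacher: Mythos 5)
Your proof is correct, but it takes a genuinely different route from the paper's. Writing $A$ for the alternating sum $\sum 1/p_j-\sum_{i<j}1/(p_i+p_j)+\cdots$ and $B$ for its squared-denominator analogue, the paper works with the Poissonized completion time $X=\max_j X_j$: it exploits the independence of the $X_j$ to get $P(X>t)=1-\prod_j(1-e^{-p_jt})$, computes $E(X^2)=\int_0^\infty 2t\,P(X>t)\,dt=2B$ by expanding the product, and then transfers back to the discrete variable $N$ via the law of total variance, which is where the correction term $-E(N)$ in $V(N)=E(X^2)-E(N)-E(N)^2$ comes from. You stay entirely discrete: the tail-sum identity $E(N^2)=\sum_{t\ge0}(2t+1)P(N>t)$, inclusion--exclusion over the events ``coupon $i$ missing after $t$ draws'' giving $P(N>t)=\sum_{\emptyset\ne I}(-1)^{|I|-1}(1-s_I)^t$, and the geometric sum $\sum_{t\ge0}(2t+1)(1-s)^t=\tfrac{2}{s^2}-\tfrac1s$ yield $E(N^2)=2B-A$ directly; all of these steps check out (including the $s_I=1$ term with the convention $0^0=1$). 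Your route needs no Poisson embedding, no appeal to the independence of the $X_j$, and no conditional-variance bookkeeping --- the $-A$ correction falls out of the discrete geometric sum rather than out of $E(V(X\mid N))$. What the paper's approach buys is the clean product form of the survival function, which generalizes to higher moments and to the full distribution by a single integral; what yours buys is an elementary, self-contained argument using the same inclusion--exclusion engine for every moment.

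One substantive point: your computation gives $V(N)=2B-A-A^2$, with a factor $2$ on the squared-denominator sum. That factor is correct --- the paper's own intermediate equations state $E(X^2)/2=B$ and $V(N)=E(X^2)-E(N)-E(N)^2$, which also yield $2B-A-A^2$, and your $n=1$ sanity check (where $A=B=1$, so $2B-A-A^2=0$ while $B-A-A^2=-1$) confirms it --- yet the theorem as printed, and equation \ref{final_var}, drop the $2$ in the final substitution. So your argument proves the corrected statement rather than the literal one, and in doing so exposes a typo in the theorem.
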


\section{A sum of geometric random variables}\label{geom_sum}

\subsection{Proof 1 of theorem \ref{thm:p1}}\label{proof:thm:p1}
Consider a state where the collector has already collected $m$ coupons. How many coupons does he need to collect to get to $m+1$? Let this be represented by the random variable, $N_m$. Then, if the total coupons needed is $N$, we have:

$$N = \sum\limits_{m=1}^n N_m$$

Every coupon collected from here is like a coin toss where with probability $\frac m n$, the collector hits a coupon he already has and makes no progress. With probability $\frac{n-m}{n}$, he collects a new coupon. So, this becomes a geometric random variable with $p=\frac{n-m}{n}$. We know that \href{"https://en.wikipedia.org/wiki/Geometric_distribution"}{a geometric random variable} has a mean $\frac{1}{p}$ and variance $\frac{1-p}{p^2}$. Hence, 

$$E(N_m)=\frac{n}{n-m}$$

Taking expectation of equation (1) and substituting we have:

$$E(N) = E(N_m) = \sum\limits_{m=1}^n \frac{n}{n-m}=n \sum\limits_{m=1}^n \frac{1}{n-m}$$

Substituting $m=n-m$ we get:

$$E(N) = n \sum\limits_{m=1}^n \frac{1}{m}$$

\subsection{Proof 1 of theorem \ref{thm:p3}}\label{proof:thm:p3}

Since the random variables $N_m$ are independent, the variance of their sum is equal to the sum of their variances. So, proceeding similarly to section \ref{proof:thm:p1} the variance, $V(N)$ can be calculated.

$$V(N) = n^2\sum\limits_{i=1}^n \frac{1}{i^2}-n\sum\limits_{k=1}^n \frac{1}{k}$$

\section{Maximum of minimums identity}\label{maxmins}

With this approach, we can prove theorems \ref{thm:p1} and \ref{thm:p2}

\subsection{Proof 1 of theorem \ref{thm:p3}}
Let $N_j$ be the number of coupons to be collected before we see the first coupon of type $j$ and $N$ the number of coupons until all are collected. We have:

$$N = \max_{1\leq j \leq n}N_j$$

In conjunction with the \href{"https://math.stackexchange.com/questions/2578996/relation-between-inclusion-exclusion-principle-and-maximum-minimums-identity"}{maximum of minimums identity} we get:

\begin{equation}\label{max_mins}N = \sum N_j - \sum_{1\leq j \leq k\leq n} \min N_j, N_k +  \sum_{1\leq j \leq k\leq i \leq n} \min N_j, N_k, N_i - \dots\end{equation}

and the fact that $\min_{1 \leq j \leq m} N_j$ is a geometric random variable with parameter $p=\sum\limits_{j=1}^m p_j$ lead to the result of theorem \ref{thm:p3} and from there, we can substitute $p_j=\frac 1 n \forall j$ to get the result of theorem \ref{thm:p1}

$$E(N) = n\sum\limits_{k=1}^n \frac 1 k$$

Note that it's not easy to get the variance, $V(N)$ with this approach because the terms in equation \ref{max_mins} are not independent.

\section{A recurrence}\label{recur}
With this approach, we can prove theorems \ref{thm:p1} and \ref{thm:p3}.

Consider a state where the collector has $m$ coupons in his collection. Let $T_m$ be the number of coupons needed to complete the collection. If the total coupons he needs to collect to complete the collection is $N$, we then have:

$$N = T_0$$

Now, we could observe that (the $N_m$ are the variables defined in section \ref{geom_sum}):

$$N_m = T_{m+1}-T_m$$

and summing over all $m$ (and noting that $T_n=0$) leads us to:

$$T_0 = \sum_m N_m$$

and this leads to the approach in section \ref{geom_sum} which makes the problem much easier to solve. Alternately, we can continue working with the $T_m$'s and construct a recurrence. Consider what happens when the collector has $m$ coupons and he collects one more. With probability $\frac{m}{n}$, he fails to add a new coupon and is back to where he started, making no progress. Let $I(\frac{n}{m})$ be a Bernoulli random variable with $p=\frac{n}{m}$. We then have the expression:

\begin{equation}\label{recurrence}T_m = 1+I\left(\frac{m}{n}\right)T_m'+\left(1-I\left(\frac{m}{n}\right)\right)T_{m+1}
\end{equation}

Where $T_m'$ is i.i.d with $T_m$.

\subsection{Proof 2 of theorem \ref{thm:p1}}
Taking expectation to both sides,

$$E(T_m) = 1+ \frac{m}{n}E(T_m)+\frac{n-m}{n}T_{m+1}$$

$$E(T_m)\left(1-\frac{m}{n}\right) = 1+ \left(1-\frac{m}{n}\right)T_{m+1}$$

$$E(T_m)-E(T_{m+1}) = \frac{n}{n-m}$$
As noted before, the L.H.S is simply $E(N_m)$ as defined in A1. In general we have,
$$\sum\limits_{m=k}^{n-1}E(T_m)-\sum\limits_{m=k}^{n-1}E(T_{m+1}) = \sum\limits_{m=k}^{n-1}\frac{n}{n-m}$$

Noting that $T_n=0$ we have,
$$E(T_k)=\sum\limits_{m=k}^{n-1}\frac{n}{n-m}$$
And letting $m=n-k$

$$E(T_{n-m}) = n\sum\limits_{k=1}^{m}\frac{1}{k}$$

We're interested in $T_0$, so let's substitute $m=n$ in equation (3).

$$E(T_0) = n \sum\limits_{k=1}^{n}\frac{1}{k}$$

\subsection{Proof 2 of theorem \ref{thm:p3}}
Now, let's try and find the variance, $V(N)=V(T_0)$. Let's square both sides of equation (1). To make the algebra easier, let's re-arrange and note that $I(\frac{m}{n})(1-I(\frac{m}{n}))=I(\frac{m}{n})-I(\frac{m}{n})^2=0$.

$$=>(T_m-1)^2 = I\left(\frac{m}{n}\right)^2 T_m'^2+(1+I\left(\frac{m}{n}\right)^2-2I\left(\frac{m}{n}\right))T_{m+1}^2$$

Now, note the following property of Bernoulli random variables: $I(\frac{m}{n})^2=I(\frac{m}{n})$. This means:

$$T_m^2-2T_m+1 = I\left(\frac{m}{n}\right) T_m'^2+(1-I\left(\frac{m}{n}\right))T_{m+1}^2$$

We have to be careful here to note which random variables are i.i.d. and which are identical. See \href{https://math.stackexchange.com/questions/3372532/how-to-square-equations-involving-random-variables}{here}.

Taking expectation and doing some algebra gives us,

$$\left(1-\frac{m}{n}\right)E(T_m^2)=2E(T_m)+\left(1-\frac{m}{n}\right)E(T_{m+1}^2)-1$$

$$=>E(T_m^2)-E(T_{m+1}^2)=2E(T_m)\frac{n}{n-m}-\frac{n}{n-m}$$

$$=>\sum\limits_{m=0}^{n-1}E(T_m^2)-\sum\limits_{m=0}^{n-1}E(T_{m+1}^2)=\sum\limits_{m=0}^{n-1}2E(T_m)\frac{n}{n-m}-\sum\limits_{m=0}^{n-1}\frac{n}{n-m}$$

$$=> E(T_0^2)-E(T_n^2)=\sum\limits_{m=0}^{n-1}2E(T_m)\frac{n}{n-m}-\sum\limits_{m=0}^{n-1}\frac{n}{n-m}$$

But, $T_n=0$ and from equation (3), $E(T_m)=n \sum\limits_{k=1}^{n-m}\frac 1 k$. So we get:

$$E(T_0^2) = \sum\limits_{m=0}^{n-1}2E(T_m)\frac{n}{n-m}-\sum\limits_{m=0}^{n-1}\frac{n}{n-m}$$

$$=>E(T_0^2) = 2n^2 \sum\limits_{m=0}^{n-1}\frac{1}{n-m}\sum\limits_{k=1}^{n-m}\frac{1}{k} -n\sum\limits_{m=0}^{n-1}\frac{1}{n-m}$$
Now, change variables $j=n-m$

$$=>E(T_0^2) = 2n^2 \sum\limits_{j=n}^{1}\frac{1}{j}\sum\limits_{k=1}^{j}\frac{1}{k} -n\sum\limits_{j=n}^{1}\frac{1}{j}$$

$$=>E(T_0^2) = 2n^2\sum\limits_{1 \leq k \leq j \leq n} \frac{1}{jk}-E(T_0)$$

This can be used in conjunction with the result of theorem \ref{thm:p1} to get the variance.

\begin{equation*}
V(T_0) = 2n^2\sum\limits_{1 \leq k \leq j \leq n} \frac{1}{jk}-E(T_0)-E(T_0)^2
\end{equation*}

Substituting the result of theorem \ref{thm:p1},
\begin{equation}\label{var_recurrence}
V(T_0) = 2n^2\sum\limits_{1 \leq k \leq j \leq n} \frac{1}{jk}-n\sum\limits_{i=1}^{n}\frac{1}{i}-\left(n\sum\limits_{i=1}^{n}\frac{1}{i}\right)^2
\end{equation}

Comparing equation \ref{var_recurrence} above with the result of theorem \ref{thm:p3} we get the easily verifiable identity:

$$2\sum_{1\leq j\leq k \leq n} \frac{1}{jk}=\sum\limits_{i=1}^n\frac{1}{i^2}+\left(\sum\limits_{i=1}^n\frac{1}{i}\right)^2$$

\section{Using a Poisson process to make dependence disappear}
Using the Poisson process to magically concoct independent random variables. This is the most powerful of all approaches since it's the only one that allows us to solve for both mean and variance for the coupon collector's problem for the general case of coupons having unequal probabilities (and higher moments as well). It is hence able to solve problems \textbf{P1} through \textbf{P4}.

In example 5.17 of \cite{ross}, the Coupon collector's problem is tackled for the general case where the probability of drawing coupon $j$ is given by $p_j$ and of course, $\sum\limits_j p_j = 1$. 

Now, he imagines that the collector collects the coupons in accordance to a Poisson process with rate $\lambda=1$. Furthermore, every coupon that arrives is of type $j$ with probability $p_j$.

Now, he defines $X_j$ as the first time a coupon of type $j$ is observed, if the $j$th coupon arrives in accordance to a Poisson process with rate $p_j$. We're interested in the time it takes to collect all coupons, $X$ (for now, eventually, we're interested in the number of coupons to be collected, $N$). So we get:

$$X = \max_{1\leq j \leq m}X_j$$

Note that if we denote $N_j$ as the number of coupons to be collected before the first coupon of type $j$ is seen, we also have for the number needed to collect all coupons, $N$:

$$N = \max_{1\leq j \leq m}N_j $$

This equation is less useful since the $N_j$ are not independent. It can still be used to get the mean (see section \ref{maxmins}), but trying to get the variance with this approach gets considerably more challenging due to this lack of independence of the underlying random variables (the are positively correlated).

But, the incredible fact that the $X_j$ are independent (discussion on that \href{https://math.stackexchange.com/questions/3421101/poisson-mixture-process-independence-used-to-devastating-effect-on-the-coupon-co}{here}), allows us to get:

\begin{equation}F_X(t) = P(X<t) = P(X_j<t \; \forall \; j) = \prod\limits_{j=1}^{m}(1-e^{-p_j t})\end{equation}

\subsection{Proof 2 of theorem \ref{thm:p2}}

Now, Ross uses the expression: $E(X) = \int\limits_0^\infty S_X(t)dt$, where $S_X(t)$ is the survival function to get:

$$E(X) = \int\limits_{0}^{\infty}\left(1-\prod\limits_{j=1}^{m}(1-e^{-p_j t})\right) dt $$

$$= \sum\limits_j\frac 1 p_j - \sum\limits_{i<j}\frac {1}{p_i+p_j} + \dots +(-1)^{m-1} \frac{1}{p_1+\dots+p_m}$$

and this proves the result of theorem \ref{thm:p2}.

\subsection{Proof 4 of theorem \ref{thm:p1}}

In the special case of all coupons having equal probabilities of being collected we have: $p_j=\frac{1}{n} \forall j$

Substituting in the equation above we get:

\begin{equation}\label{gen_expectn}
E(X) = \sum\limits_{k=1}^{n}(-1)^k \frac{{n \choose k}}{k}
\end{equation}

Let's solve a general version of the binomial sum in equation \ref{gen_expectn}.

\begin{proposition}
We have the following binomial sum:
$$\sum_{k=1}^n(-1)^{k-1}\frac{{n\choose k}}{k^r}=\sum_{i_1<i_2<\dots <i_r}\frac{1}{i_1 i_2 \dots i_r}$$
\end{proposition}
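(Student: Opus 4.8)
The plan is to prove the identity
$$\sum_{k=1}^n(-1)^{k-1}\frac{\binom{n}{k}}{k^r}=\sum_{1\le i_1<i_2<\dots <i_r\le n}\frac{1}{i_1 i_2 \dots i_r}$$
by induction on $r$, with the base case $r=1$ supplied by the known evaluation $\sum_{k=1}^n(-1)^{k-1}\binom{n}{k}/k=\sum_{m=1}^n 1/m$ (which is exactly what the Poisson-process computation of $E(X)$ in the equal-probability case already establishes, comparing equation~\eqref{gen_expectn} with Theorem~\ref{thm:p1}). So I would first record that base case, then set up the inductive step.

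For the inductive step, the key device is the integral representation $\frac{1}{k^{r}}=\frac{1}{(r-1)!}\int_0^1 (-\ln x)^{r-1} x^{k-1}\,dx$, or more convenient here, the simpler recursion $\frac{1}{k^{r}}=\int_0^1 \frac{t^{k-1}}{k^{r-1}}\cdot(\text{something})$—actually the cleanest route is to use $\frac{1}{k^r}=\int_0^1 t^{k-1}\,dt \cdot \frac{1}{k^{r-1}}$ is not quite an identity, so instead I would use the genuinely correct step: define $f_r(n)=\sum_{k=1}^n(-1)^{k-1}\binom{n}{k}/k^r$ and show $f_r(n)-f_r(n-1)=\frac{1}{n}f_{r-1}(n)$ wait—let me instead use the telescoping/finite-difference approach. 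Write $S_r(n)$ for the left-hand side and $A_r(n)$ for the right-hand side. The right-hand side obviously satisfies $A_r(n)-A_r(n-1)=\frac{1}{n}A_{r-1}(n-1)$, because the new subsets counted are exactly those containing $n$ as their largest element, contributing $\frac1n$ times the sum over $(r-1)$-subsets of $\{1,\dots,n-1\}$. So it suffices to prove the left-hand side satisfies the same recursion, i.e.
$$S_r(n)-S_r(n-1)=\frac{1}{n}\,S_{r-1}(n-1),$$
together with the initial values $S_r(1)=1=A_r(1)$. Using $\binom{n}{k}-\binom{n-1}{k}=\binom{n-1}{k-1}$ gives
$$S_r(n)-S_r(n-1)=\sum_{k=1}^n(-1)^{k-1}\frac{\binom{n-1}{k-1}}{k^r}=\frac1n\sum_{k=1}^n(-1)^{k-1}\frac{\binom{n}{k}}{k^{r-1}}\cdot\frac{k}{n}\cdot\frac{n}{k},$$
so the real identity I need is the well-known $\binom{n-1}{k-1}/k=\binom{n}{k}/n$, which turns the difference into $\frac1n\sum_{k=1}^n(-1)^{k-1}\binom{n}{k}/k^{r-1}=\frac1n S_{r-1}(n)$. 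That is $\frac1n S_{r-1}(n)$, not $\frac1n S_{r-1}(n-1)$; but the two are reconciled because $A_{r-1}(n)$ and $A_{r-1}(n-1)$ are being compared against the same recursion at level $r-1$, so I would instead phrase the whole argument as: assuming $S_{r-1}=A_{r-1}$ as functions, we get $S_r(n)-S_r(n-1)=\frac1n A_{r-1}(n)$, and then check directly (again by the largest-element decomposition, or by a second small induction on $n$) that $\sum_{m=1}^n \frac1m A_{r-1}(m)$ telescopes to $A_r(n)$ — equivalently that $A_r(n)-A_r(n-1)=\frac1n A_{r-1}(n)$, which is true since adjoining $n$ to any $(r-1)$-subset of $\{1,\dots,n\}$ whose elements are all $\le n$ but — careful — we need the largest of the $r-1$ chosen indices to be $<n$; since $i_{r-1}<i_r=n$ forces $i_{r-1}\le n-1$, this is $A_{r-1}(n-1)$, giving $A_r(n)-A_r(n-1)=\frac1n A_{r-1}(n-1)$. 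I will need to reconcile $A_{r-1}(n)$ versus $A_{r-1}(n-1)$ here, and that discrepancy is exactly the subtle point.

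The cleanest fix, and the one I would actually carry out, avoids this bookkeeping entirely: prove instead the two-variable recursion $S_r(n)-S_r(n-1)=\tfrac1n S_{r-1}(n-1)$ directly. From $S_r(n)-S_r(n-1)=\sum_{k=1}^n(-1)^{k-1}\binom{n-1}{k-1}/k^r$, substitute $\binom{n-1}{k-1}=\binom{n}{k}k/n$... this again gives $\tfrac1n S_{r-1}(n)$. To land on $S_{r-1}(n-1)$, shift the index $k\mapsto k+1$ and use Pascal again, or note that $\sum_{k=1}^n (-1)^{k-1}\binom{n-1}{k-1}/k^r$ with $j=k-1$ equals $\sum_{j=0}^{n-1}(-1)^j\binom{n-1}{j}/(j+1)^r$, and then apply the beta-type identity $\frac{1}{(j+1)^r}=\frac{1}{(r-1)!}\int_0^1 x^j(-\ln x)^{r-1}dx$ so the sum becomes $\frac{1}{(r-1)!}\int_0^1 (1-x)^{n-1}(-\ln x)^{r-1}dx$; a parallel computation shows $\tfrac1n S_{r-1}(n-1)$ equals the same integral after one integration by parts. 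I expect the main obstacle to be precisely this: getting the finite-difference recursion to close on the correct arguments ($n-1$ versus $n$), and I would resolve it by committing to the integral representation $\frac{1}{k^r}=\frac{1}{(r-1)!}\int_0^1 t^{k-1}(-\ln t)^{r-1}\,dt$ from the outset, summing the binomial series inside the integral to get $\int_0^1 (1-t)^{n-1}\frac{(-\ln t)^{r-1}}{(r-1)!}\,dt$ in closed form, and then showing by induction (integrating by parts to peel off one factor of $-\ln t$, which drops $r$ by one and pulls out a $\frac1n$ together with a change $(1-t)^{n-1}\to(1-t)^{n}$ handled via $\int_0^1(1-t)^{n-1}g\,dt$ manipulations) that this integral equals $\sum_{i_1<\dots<i_r\le n}\frac{1}{i_1\cdots i_r}$. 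The elementary combinatorial identity $A_r(n)=\sum_{m=1}^n \frac1m A_{r-1}(m-1)$ then matches the integration-by-parts recursion term for term, completing the induction.
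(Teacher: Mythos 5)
Your proposal does not close, and the reason is worth pinpointing: the recursion you correctly derive for the left-hand side, $S_r(n)-S_r(n-1)=\frac{1}{n}S_{r-1}(n)$ (via $\binom{n}{k}-\binom{n-1}{k}=\binom{n-1}{k-1}$ and $\binom{n-1}{k-1}/k=\binom{n}{k}/n$), genuinely does not match the recursion $A_r(n)-A_r(n-1)=\frac{1}{n}A_{r-1}(n-1)$ satisfied by the strict-inequality sum, and no amount of bookkeeping will reconcile them, because the identity as stated is false. Take $r=2$, $n=2$: the left side is $\binom{2}{1}/1^2-\binom{2}{2}/2^2=7/4$, while $\sum_{1\le i_1<i_2\le 2}\frac{1}{i_1 i_2}=\frac{1}{2}$. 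The correct right-hand side uses weak inequalities, $\sum_{1\le i_1\le\cdots\le i_r\le n}\frac{1}{i_1\cdots i_r}$ (for $r=2$, $n=2$ this is $1+\frac{1}{2}+\frac{1}{4}=7/4$); that is exactly what the paper's own derivation produces ($\sum_{k=1}^n\sum_{j=1}^k\frac{1}{jk}$ in the $r=2$ case), even though its displayed proposition carries the same typo. Your later attempts to force the difference to equal $\frac{1}{n}S_{r-1}(n-1)$ --- the index shift, the $(-\ln t)^{r-1}$ integral with an integration by parts --- are therefore attempts to prove something false and would all fail; a numerical check of the smallest nontrivial case would have exposed this before any machinery was deployed.

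The good news is that your finite-difference idea proves the corrected statement immediately and is arguably cleaner than the paper's argument. Writing $B_r(n)=\sum_{1\le i_1\le\cdots\le i_r\le n}\frac{1}{i_1\cdots i_r}$ and splitting off the terms with $i_r=n$ (which now only forces $i_{r-1}\le n$, not $i_{r-1}\le n-1$) gives $B_r(n)-B_r(n-1)=\frac{1}{n}B_{r-1}(n)$ --- precisely your $S$-recursion. Together with the boundary values $S_r(1)=1=B_r(1)$ and the base case $r=1$ (or $r=0$, where both sides equal $1$), a double induction on $(r,n)$ finishes the proof with no integrals at all. This is a genuinely different route from the paper, which repeatedly integrates the generating function $\frac{1-(1-t)^n}{t}=\sum_{j=0}^{n-1}(1-t)^j$, dividing by $x$ and integrating over $[0,1]$ once per extra power of $k$ in the denominator; the paper carries this out explicitly only for $r=2$ and asserts the general case, whereas your recursion, once aimed at the weak-inequality sum, handles all $r$ uniformly and rigorously.
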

\begin{proof}
Using the Binomial theorem:

$$\frac{1-(1-t)^n}{t} = \sum\limits_{k=1}^n (-1)^{k-1}{{n \choose k}}{t^{k-1}}$$

Integrate both sides from $0$ to $x$.

$$\int\limits_0^x \frac{1-(1-t)^n}{t}dx = \sum\limits_{k=1}^n (-1)^{k-1}{{n \choose k}}\frac{x^{k}}{k}$$

For the LHS, let $1-t=u$

$$\int\limits_1^{1-x} \frac{1-(u)^n}{1-u}(-du) = \sum\limits_{k=1}^n (-1)^{k-1}{{n \choose k}}\frac{x^{k}}{k}$$

$$\frac{\sum\limits_{k=1}^n\frac{1-(1-x)^k}{k}}{x} = \sum\limits_{k=1}^n (-1)^{k-1} \frac{{n\choose k}}{k}x^{k-1}$$

Integrate both sides from $0$ to $1$, we get:

$$\sum\limits_{k=1}^n \frac 1 k \int\limits_0^1 \frac{1-(1-x)^k}{x} dx = \sum \frac{{n \choose k}}{k^2} (-1)^{k-1}$$

Substituting $1-x=t$ in the integral and expanding the geometric series we get:

$$\sum\limits_{k=1}^n \frac 1 k \sum\limits_{j=1}^k \frac 1 j = \sum \frac{{n \choose k}}{k^2} (-1)^{k-1} = \sum\limits_{k=1}^n\sum\limits_{j=1}^k \frac {1}{jk}$$

This can very easily be extended to $k^r$ in the denominator:
\begin{equation}\label{binom_sum}
\sum_{k=1}^n(-1)^{k-1}\frac{{n\choose k}}{k^r}=\sum_{i_1<i_2<\dots <i_r}\frac{1}{i_1 i_2 \dots i_r}
\end{equation}
\end{proof}

Substituting $r=1$ in equation \ref{binom_sum} and equation \ref{gen_expectn} we have,
$$E(X) = n\sum\limits_{k=1}^n \frac{1}{k}$$

Further, Ross shows that $E(N)=E(X)$ using the law of total expectation.

First, he notes,

$$E(X|N=n)=nE(T_i)$$

where $T_i$ are the inter-arrival times for coupon arrivals. Since these are assume to be exponential with rate 1,

$$E(X|N)=N$$

Taking expectations on both sides and using the law of total expectation we get:

$$E(X)=E(N)$$

\subsection{Proof 1 of theorem \ref{thm:p4}}
This approach can easily be extended to find $V(N)$, the variance (not covered by Ross). We can use the following expression to get $E(X^2)$:

$$E(X^2) = \int\limits_0^\infty 2tP(X>t)dt = \int\limits_0^\infty 2t\left(1-\prod\limits_{j=1}^n(1-e^{-p_j t})\right)dt$$

Using the fact that $\int\limits_0^\infty te^{-pt}=\frac{1}{p^2}$ and the same algebra as for $E(X)$ we get:

\begin{equation}\label{e_xsq_gen}
\frac{E(X^2)}{2} = \sum \frac {1} {p_j^2} -\sum_{i<j} \frac{1}{(p_i+p_j)^2}+\dots +(-1)^{n-1}\frac{1}{(p_1+\dots+p_n)^2}
\end{equation}

Equation \ref{e_xsq_gen} has given us $E(X^2)$ but remember that we're interested in finding $E(N^2)$ and from there, $V(N)$. So, we need to relate the variances of the two random variables. Using the law of total variance we get:

$$V(X)=E(V(X|N))+V(E(X|N))$$

So per equation (3) we have:

$$V(X)=E(V(X|N))+V(N)$$

Now, 

$$V(X|N)=NV(T_i)$$

And since $T_i \sim Exp(1)$, we have $V(T_i)=1$ meaning, $V(X|N)=N$.

Substituting into (2),

$$V(X)=E(N)+V(N)$$
So,
\begin{equation}\label{gen_var}
 V(N)=E(X^2)-E(N)-E(N)^2
\end{equation}

Substituting equation \ref{e_xsq_gen} and the result of theorem \ref{thm:p2} into equation \ref{gen_var} we get:

\begin{multline}\label{final_var}
V(N) = \left(\sum \frac {1} {p_j^2} -\sum_{i<j} \frac{1}{(p_i+p_j)^2}+\dots +(-1)^{n-1}\frac{1}{(p_1+\dots+p_n)^2}\right)-\\
\left(\sum \frac {1} {p_j} -\sum_{i<j} \frac{1}{(p_i+p_j)}+\dots +(-1)^{n-1}\frac{1}{(p_1+\dots+p_n)}\right)^2-\\
\left(\sum \frac {1} {p_j} -\sum_{i<j} \frac{1}{(p_i+p_j)}+\dots +(-1)^{n-1}\frac{1}{(p_1+\dots+p_n)}\right)
\end{multline}

\subsection{Proof 3 of theorem \ref{thm:p2}}
Now, let's consider the special case where all coupons have an equal probability of being selected. In other words, $p_j=\frac 1 n \; \forall \; j$.

We get: 

\begin{equation}\label{e_xsq}
\frac{E(X^2)}{2} = n^2\left(\sum\limits_{k=1}^n (-1)^{k-1}\frac{{n\choose k}}{k^2}\right)
\end{equation}


We now solve a general version of the binomial summation in equation \ref{e_xsq} above.

Using equations \ref{binom_sum} and \ref{e_xsq} we get:

\begin{equation}\label{e_xsq_soln}
E(X^2) = 2n^2\left( \sum_{j=1}^n\sum_{k=1}^j\frac{1}{jk}\right)
\end{equation}

Using equations \ref{e_xsq_soln} and \ref{gen_var}, we get the same result we got from the recurrence in section \ref{recur}, equation \ref{var_recurrence}.


\section*{Acknowledgements}
I'd like to thank mathexchange user, Simon for encouraging me to convert the Q\&A page on this into a paper.

\end{document}